\documentclass[12pt]{article}
\usepackage{amsmath, amssymb, amsfonts, amsthm}
\usepackage[utf8]{inputenc}
\usepackage[T1]{fontenc}
\usepackage{lmodern}
\usepackage{geometry}
\usepackage{hyperref}
\usepackage{xcolor}
\newtheorem{theorem}{Theorem}[section]
\newtheorem{proposition}[theorem]{Proposition}
\newtheorem{lemma}[theorem]{Lemma}

\newtheorem{corollary}[theorem]{Corollary}

\begin{document}

\title{Projections and minimal invariant subspaces in the Hardy space over the bidisk}

\author{
João Marcos R. do Carmo \\
\small Instituto Federal da Bahia - Campus Seabra \\
\small \texttt{joaoribeiro@ifba.edu.br}
\and
Marcos S. Ferreira \\
\small Universidade Estadual de Santa Cruz \\
\small \texttt{msferreira@uesc.br}
}

\date{}

\maketitle

\begin{abstract}
We study the rigidity of the orthogonal projections
$P_{n}:H^2(\mathbb{D}^2)\rightarrow H_{n}$, where
$H^{2}(\mathbb{D}^{2})=\bigoplus_{n=0}^{\infty}H_{n}$ and each
$H_{n}=H^{2}(z)w^{n}$ is a reducing subspace for $T_{z}$, and we
derive properties of the minimal invariant subspaces of $T_{z}^{*}$
. Our main result establishes that the nonzero projections of
a minimal invariant subspace onto the one-variable Hardy space share
a common $t_z^*$-invariant structure. Furthermore, we relate the
results obtained to an approach to the invariant subspace problem.
\end{abstract}

\noindent\textbf{Mathematics Subject Classification (2020):} 30H10, 47B35, 47A15.

\noindent\textbf{Keywords:} Invariant subspace problem, Toeplitz operator, Hardy space, universal operator.

\section{Introduction and background}
Let $\mathcal{H}$ be a Hilbert space and $\mathcal{L}(\mathcal{H})$ the algebra of bounded operators on $\mathcal{H}$. An invariant subspace of $T\in\mathcal{L}(\mathcal{H})$ is a closed subspace $E\subset\mathcal{H}$ such that $TE\subset E$. In this case, we say that $E$ is a $T$-invariant subspace.

The Invariant Subspace Problem (ISP) for complex separable Hilbert space asks if every bounded operator has a nontrivial invariant subspace. Recent works \cite{Chalendar, Cowen, Cowen2} are references for some modern approaches to the ISP.

In 1960, Rota \cite{Rota} introduced a class of operators with an invariant subspace structure so rich enough to model any operator on a Hilbert space. We say that an operator $U\in\mathcal{L}(\mathcal{H})$ is universal for $\mathcal{H}$ if for any $A\in\mathcal{L}(\mathcal{H})$ there exist a $U$-invariant subspace $M$, a non-zero scalar $\lambda$ and a linear isomorphism $X$ of $\mathcal{H}$ onto $M$ such that $UX=\lambda XA$.

The relationship between universal operators and the ISP is as follows.

\begin{theorem}\label{teo1}
If $U$ is universal for a separable, infinite dimensional Hilbert space $\mathcal{H}$ then the ISP is true if and only if every minimal invariant subspace for $U$ is one dimensional. 
\end{theorem}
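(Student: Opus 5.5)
The plan is to prove the two implications separately. Throughout, ``minimal invariant subspace'' means a nonzero closed $U$-invariant subspace containing no nonzero closed $U$-invariant subspace other than itself.

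\emph{ISP implies minimal subspaces are one dimensional.} Let $M$ be a minimal invariant subspace for $U$ and consider the restriction $U|_M\in\mathcal{L}(M)$.
\begin{itemize}
\item Closed $U|_M$-invariant subspaces of $M$ are exactly the closed $U$-invariant subspaces contained in $M$. So minimality says $U|_M$ has no nontrivial invariant subspace.
\item If $\dim M=\infty$, then $M$ is a separable infinite dimensional Hilbert space. It is therefore unitarily isomorphic to $\mathcal{H}$, and the ISP (transported along the isomorphism) gives a contradiction.
\item If $1<\dim M<\infty$, then $U|_M$ has an eigenvector, since $\mathbb{C}$ is algebraically closed. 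Its span is a nontrivial invariant subspace, again a contradiction.
\end{itemize}
Hence $\dim M=1$.

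\emph{Minimal subspaces one dimensional implies ISP.} Take $A\in\mathcal{L}(\mathcal{H})$ and use universality to get $M$, $\lambda\neq0$ and an isomorphism $X:\mathcal{H}\to M$ with $UX=\lambda XA$.
\begin{itemize}
\item \emph{Transfer of invariant subspaces.} If $N\subset M$ is closed and $U$-invariant, then $X^{-1}N$ is a closed $A$-invariant subspace of $\mathcal{H}$. Indeed, $X^{-1}$ is bounded by the open mapping theorem, and for $x\in X^{-1}N$ we have $XAx=\lambda^{-1}UXx\in N$. Since $X$ is a bijection onto $M$, $X^{-1}N$ is nontrivial exactly when $\{0\}\neq N\neq M$.
\item \emph{Reduction to a minimal subspace.} It therefore suffices to find a nontrivial closed $U$-invariant subspace of $M$, or to show $M$ itself is minimal for $U$. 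In the latter case $M$ would be one dimensional by hypothesis, which is impossible since $M\cong\mathcal{H}$ is infinite dimensional.
\item \emph{Choice of $N$.} For each $0\neq x\in M$, let $N_x=\overline{\operatorname{span}}\{U^nx:n\ge0\}\subset M$. If some $N_x\neq M$, we are done.
\item \emph{Remaining case.} Otherwise every nonzero $x\in M$ is cyclic for $U|_M$. Any nonzero closed $U$-invariant $N\subset M$ contains some $N_x=M$, so $M$ is minimal, which gives the contradiction above.
\end{itemize}
Thus $A$ has a nontrivial invariant subspace, and the ISP holds.

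\emph{Main obstacle.} The only subtle point is the dichotomy in the second half: ``either some orbit closure is proper, or $M$ is minimal.'' This is handled by the cyclic-vector argument above, so no Zorn's lemma is needed. Beyond that, one must note that $X^{-1}$ is bounded, so preimages of closed sets are closed, and that $\lambda\neq0$ is what makes $X^{-1}N$ invariant. The first half is routine once one observes that $M$ being infinite dimensional and separable lets the ISP hypothesis apply to $U|_M$ on the space $M$.
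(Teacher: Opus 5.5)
Your proof is correct. The paper gives no proof of Theorem~\ref{teo1} to compare it with: the theorem is quoted in the introduction as a known background fact about universal operators. What you have written is the standard argument, and both implications go through as stated.

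A few small remarks follow.

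\begin{itemize}
\item The step you single out as the main obstacle is not really one. The dichotomy ``either $M$ contains a closed $U$-invariant $N$ with $\{0\}\neq N\neq M$, or $M$ is minimal'' is simply the definition of minimality. So the second half shortens to the following. The space $M\cong\mathcal{H}$ is infinite dimensional, so by hypothesis it is not minimal. Hence it contains such an $N$, and $X^{-1}N$ is then a nontrivial closed $A$-invariant subspace. The orbit-closure detour through the sets $N_x$ is harmless but adds nothing.
\item Closedness of $X^{-1}N=\{x\in\mathcal{H}: Xx\in N\}$ follows from continuity of $X$ itself, since it is a preimage under $X$. The open mapping theorem is not needed for that step.
\item In the second half, you could say explicitly that it suffices to treat $A\in\mathcal{L}(\mathcal{H})$. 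This holds because every separable infinite-dimensional Hilbert space is unitarily equivalent to $\mathcal{H}$, as you noted in the first half.
\end{itemize}
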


The main tool thus far for identifying universal operators has been the criterion of Caradus \cite{Caradus}. Caradus showed that, for a separable Hilbert space $\mathcal{H}$, an operator $U\in\mathcal{L}(\mathcal{H})$ is universal if $\operatorname{dim}\operatorname{Ker}(U)=\infty$ and $U$ is surjective. A well-known example of a universal operator on $\mathcal{H}$ is the adjoint of a unilateral shift of infinite multiplicity, which was introduced by Rota. In fact, considering $S^{*}$ acting on
$$
\ell^{2}(\mathcal{H})=\left\{(f_{n})_{n=0}^{\infty}:\sum_{n=0}^{\infty}||f_{n}||^{2}_{\mathcal{H}}<\infty\right\}
$$
by
$$
S^{*}(f_{0},f_{1},f_{2},\cdots)=(f_{1},f_{2},\cdots)
$$
we have that $S^{*}$ satisfies the Caradus criterion and so is an universal operator.

Let $\mathbb{T}^{2}$ be the Cartesian product of 2 copies of $\mathbb{T}=\partial\mathbb{D}$ equiped with the normalized Haar measure $\sigma$. Let $L^{2}(\mathbb{T}^{2})$ denote the usual Lebesgue space and $L^{\infty}(\mathbb{T}^{2})$ the essentially bounded functions with respect to $\sigma$. The Hardy space $H^{2}(\mathbb{D}^{2})$ is the Hilbert space of holomorphic functions $f$ on $\mathbb{D}^{2}$ for which
\begin{equation*}
\|f\|^{2}=\sup_{0<r<1}\int_{\mathbb{T}^{2}}|f(r\zeta)|^{2}d\sigma(\zeta)<\infty.  
\end{equation*}

It is well known that if $f\in H^{2}(\mathbb{D}^{2})$, then the radial limit
$$
f^{*}(\zeta)=\lim_{r\rightarrow1}f(r\zeta)
$$
exists for almost all $\zeta\in\mathbb{T}^{2}$ and
$$
\lim_{r\rightarrow1}\int_{\mathbb{T}^{2}}|f_{r}-f^{*}|^{2}d\sigma=0,
$$
where  $f_{r}(\zeta)=f(r\zeta)$ for all $\zeta\in\mathbb{T}^{2}$. Thus, $H^{2}(\mathbb{D}^{2})$ can be viewed as a bounded subspace of $L^{2}(\mathbb{T}^{2})$. Denote by $H^{\infty}(\mathbb{D}^{2})$ the space of bounded analytic functions on $\mathbb{D}^{2}$. An inner function in $\mathbb{D}^{2}$ is a function $f\in H^{\infty}(\mathbb{D}^{2})$ such that $|f^{*}|=1$ a.e. on $\mathbb{T}^{2}$.

Let $P$ denote the orthogonal projection from $L^{2}(\mathbb{T}^{2})$ onto $H^{2}(\mathbb{D}^{2})$. For a function $\varphi\in L^{\infty}(\mathbb{T}^{2})$, the Toeplitz operator $T_{\varphi}$ with symbol $\varphi$ is defined by
$$
T_{\varphi}f=P(\varphi f)
$$
for $f\in H^{2}(\mathbb{D}^{2})$. Then $T_{\varphi}$ is a bounded linear operator on $H^{2}(\mathbb{D}^{2})$ and its adjoint is given by $T^{*}_{\varphi}=T_{\overline{\varphi}}$. Under analogous conditions, the 1-dimensional Toeplitz operator with symbol $\varphi$ will be denoted by $t_{\varphi}$.

Ferreira and Noor \cite{Ferreira} characterized all analytic Toeplitz operators $T_\varphi$ whose adjoints satisfy the Caradus criterion for universality.

\begin{theorem}\label{t1}
Let $\varphi\in H^{\infty}(\mathbb{D}^{2})$. Then $T^{*}_{\varphi}$ satisfies the Caradus criterion for universality if, and only if, $\varphi$ is invertible in $L^{\infty}(\mathbb{T}^{2})$ but non-invertible in $H^{\infty}(\mathbb{D}^{2})$.
\end{theorem}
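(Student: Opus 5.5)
We have to prove Theorem \ref{t1}: for $\varphi\in H^\infty(\mathbb{D}^2)$, $T_\varphi^*$ is surjective with $\dim\operatorname{Ker}T_\varphi^*=\infty$ exactly when $\varphi$ is invertible in $L^\infty(\mathbb{T}^2)$ but not in $H^\infty(\mathbb{D}^2)$. Since $\varphi$ is analytic, $T_\varphi$ is multiplication by $\varphi$ on $H^2(\mathbb{D}^2)$. So $T_\varphi^*$ surjective means $T_\varphi$ is bounded below, and $\operatorname{Ker}T_\varphi^*=(\varphi H^2(\mathbb{D}^2))^\perp$. The plan is to check the two Caradus conditions against these two reformulations.

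\emph{Surjectivity and invertibility in $L^\infty$.}
Suppose $|\varphi|\ge\delta>0$ a.e. on $\mathbb{T}^2$. Then $\|\varphi f\|_2\ge\delta\|f\|_2$ for every $f$, so $T_\varphi$ is bounded below. Hence $T_\varphi$ has closed range and $T_\varphi^*$ is onto.

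Conversely, suppose $T_\varphi$ is bounded below by $\delta$. Since $T_\varphi$ is an isometric restriction of the multiplication operator $M_\varphi$ on $L^2(\mathbb{T}^2)$, we have $\|\varphi f\|\ge\delta\|f\|$ for $f\in H^2$. Multiplying by $\bar z^m\bar w^n$ (a unimodular factor) extends this to all trigonometric polynomials, and by density to all of $L^2(\mathbb{T}^2)$. Now test against characteristic functions of the sets $\{|\varphi|<\delta/2\}$: this forces those sets to have measure zero, so $|\varphi|\ge\delta$ a.e. and $\varphi$ is invertible in $L^\infty$.

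\emph{Infinite-dimensional kernel and non-invertibility in $H^\infty$.}
If $\varphi$ is invertible in $H^\infty(\mathbb{D}^2)$, then $T_\varphi$ is invertible, and its adjoint has trivial kernel. So the Caradus criterion forces non-invertibility in $H^\infty$.

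For the converse, assume $\varphi$ is invertible in $L^\infty$ but not in $H^\infty$. We need $\varphi H^2$ to have infinite codimension; this is the main obstacle. The two-variable setting helps here.

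First, $\varphi H^2\neq H^2$. Otherwise $1=\varphi g$ with $g\in H^2$, so $g=1/\varphi\in L^\infty\cap H^2=H^\infty(\mathbb{D}^2)$, where boundedness in the disk comes from the Poisson integral. That would make $\varphi$ invertible in $H^\infty$, a contradiction. So there is $0\ne k\perp\varphi H^2$.

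To get infinitely many independent kernel vectors, use the fact that $T_\varphi^*$ commutes with $T_z^*$ and $T_w^*$. Hence $\operatorname{Ker}T_\varphi^*$ is invariant under $T_z^*$ and $T_w^*$. Suppose, for contradiction, that it is finite dimensional. Then $T_z^*$ and $T_w^*$ are commuting operators on this finite-dimensional space, so they have a common eigenvector $k_\lambda$. Common eigenvectors of $T_z^*,T_w^*$ are the reproducing kernels at points $(\lambda_1,\lambda_2)\in\mathbb{D}^2$. Then $k_\lambda\perp\varphi H^2$ gives $\varphi(\lambda)=0$.

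Zeros of a holomorphic function of two variables are never isolated, so $\varphi$ vanishes on an analytic variety $Z$ through $\lambda$, and this variety meets $\mathbb{D}^2$ in infinitely many points. The reproducing kernels at distinct points of $Z$ are linearly independent, and all of them lie in $\operatorname{Ker}T_\varphi^*$. This contradicts finite dimensionality, so $\dim\operatorname{Ker}T_\varphi^*=\infty$.

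The delicate point is the finite-dimensional reduction. It requires $\operatorname{Ker}T_\varphi^*\neq0$, which was established above, and it requires that common eigenvectors of $T_z^*,T_w^*$ are exactly the kernel functions $k_\lambda$. The latter follows by comparing Taylor coefficients.
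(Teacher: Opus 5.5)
Note first that the paper does not prove this theorem. It quotes it from Ferreira and Noor and uses it only to conclude that $T_z^*$ and $T_w^*$ are universal, so there is no in-paper argument to compare with. Judged on its own, your proof is correct.

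The surjectivity half is the standard chain: $T_\varphi^*$ onto $\iff$ $T_\varphi$ bounded below $\iff$ $|\varphi|\geq\delta$ a.e., the last step via the unimodular shift to trigonometric polynomials. Testing on $\{|\varphi|<\delta/2\}$ only gives $|\varphi|\geq\delta/2$, but that is all you need. In the kernel half, three steps each deserve one more line:
\begin{itemize}
\item Passing from $\varphi H^2\neq H^2$ to a nonzero $k\perp\varphi H^2$ uses that the range is closed. You have this from bounded-belowness, but it should be cited at that point.
\item The Taylor-coefficient computation for a joint eigenvector gives $a_{mn}=a_{00}\mu^m\nu^n$. Square-summability is what forces $|\mu|,|\nu|<1$, so the eigenvector is $k_\lambda$ with $\lambda=(\bar\mu,\bar\nu)\in\mathbb{D}^2$.
\item Non-isolation of zeros can be made explicit by Hurwitz's theorem. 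If $z\mapsto\varphi(z,\lambda_2)$ is not identically zero, then for every $w$ near $\lambda_2$ the slice $z\mapsto\varphi(z,w)$ has a zero near $\lambda_1$. Otherwise $\mathbb{D}\times\{\lambda_2\}$ lies in the zero set. Here $\varphi\not\equiv 0$ because $|\varphi|\geq\delta$ on $\mathbb{T}^2$.
\end{itemize}

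The joint-eigenvector reduction is the real content, and it is a good choice. It shows that for any nonzero $\varphi\in H^\infty(\mathbb{D}^2)$, the space $(\varphi H^2(\mathbb{D}^2))^\perp$ is either $\{0\}$ or infinite dimensional. It also handles the delicate zero-free case (no zeros in $\mathbb{D}^2$ but $\inf_{\mathbb{D}^2}|\varphi|=0$) without any factorization theory, which is not available in two variables. In that case the kernel is nonzero yet contains no reproducing kernel, hence no joint eigenvector of $T_z^*,T_w^*$, hence it cannot be finite dimensional. This is exactly where the one-variable analogue differs: there $\ker t_\varphi^*$ is the model space of the inner factor of $\varphi$, which is finite dimensional when that factor is a finite Blaschke product.
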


In particular the backward shift operators $T^{*}_{z}$ and $T^{*}_{w}$ are universal. Thus by Theorem \ref{teo1} it follows that the ISP has a positive solution if and only if every minimal $T_{z}^{*}$-invariant subspace in $H^{2}(\mathbb{D}^{2})$ is one dimensional, that is, is generated by an eigenvector. The description off all $T^{*}_{z}$-invariant subspaces is an important problem in multivariable operator theory (see \cite{Sarkar, Yang}) which could therefore lead to a solution of the ISP.

Let $H^{2}(z)$ and $H^{2}(w)$ denote the classical Hardy spaces over $\mathbb{D}$ in the variables $z$ and $w$ respectively. Then given $g\in H^{2}(\mathbb{D}^{2})$, we write
$$
g(z,w)=\sum_{n=0}^{\infty}g_{n}(z)w^{n}
$$
for the Taylor series expansion of $g$ with respect to $w$-variable where
$$
g_{n}(z)=\int_{\mathbb{T}}g(z,w)\overline{w^{n}}dw
$$
for each $n$ and
$$
\sum_{n=0}^{\infty}\|g_{n}\|^{2}_{H^{2}(z)}<\infty.
$$

Thus if we denote $H_{n}=H^{2}(z)w^{n}$ for each $n\in\mathbb{N}$, we have the decomposition
\begin{equation}\label{eq2}
H^{2}(\mathbb{D}^{2})=\bigoplus_{n=0}^{\infty}H_{n}
\end{equation}
and each $H_{n}$ is a reducing subspace for $T_{z}$. Let $P_{n}:H^2(\mathbb{D}^2)\rightarrow H_{n}$ be the orthogonal projection onto $H_{n}$. In \cite{Chen}, the authors used this decomposition of $H^{2}(\mathbb{D}^{2})$ to study invariant subspaces of the kernel of $T^{*}_{z}$. An important fact that will be useful in our work is the following.

\begin{proposition}
Let $M\subset H^{2}(\mathbb{D}^{2})$ be a $T^{*}_{z}$-invariant subspace. Then, for every $n\in\mathbb{N}$, $\overline{P_{n}(M)}$ is $T^{*}_{z}$-invariant subspace. Equivalently, under the identification $H_{n}=H^{2}(\mathbb{D})w^{n}$, $\overline{P_{n}(M)}$ corresponds to a $t_{z}^{*}$-invariant subspace of $H^{2}(\mathbb{D})$.
\end{proposition}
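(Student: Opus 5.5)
The plan is to show that $P_n$ commutes with $T_z^*$, then pass to closures and finally transport the statement to one variable.

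First we show that $P_nT_z^*=T_z^*P_n$ for every $n$. Write $g=\sum_{k\ge0}g_k(z)w^k$ with $g_k\in H^2(z)$. On Fourier coefficients in $L^2(\mathbb{T}^2)$, $T_z^*$ acts by $g\mapsto P(\bar z g)$, which deletes the monomials $z^0w^k$ and shifts $z^{j}w^k\mapsto z^{j-1}w^k$. Hence
$$T_z^*g=\sum_{k\ge0}(t_z^*g_k)(z)\,w^k .$$
Applying $P_n$ to this gives $(t_z^*g_n)w^n$. Computing in the other order, $P_ng=g_nw^n$ and $T_z^*(g_nw^n)=(t_z^*g_n)w^n$. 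So $P_nT_z^*=T_z^*P_n$. This is just the statement that each $H_n$ reduces $T_z$, as noted before the Proposition: a subspace reduces $T_z$ exactly when its projection commutes with $T_z$, and then $P_n$ also commutes with $T_z^*$.

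Next let $M$ be $T_z^*$-invariant and take $x=P_nm\in P_n(M)$. Then $T_z^*x=P_nT_z^*m\in P_n(M)$, so the linear manifold $P_n(M)$ is $T_z^*$-invariant. For the closure, take $x\in\overline{P_n(M)}$ and $x_j\in P_n(M)$ with $x_j\to x$. Since $T_z^*$ is bounded, $T_z^*x_j\to T_z^*x$, so $T_z^*x\in\overline{P_n(M)}$. Therefore $\overline{P_n(M)}$ is a closed $T_z^*$-invariant subspace, and it lies in $H_n$ because $H_n$ is closed.

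For the equivalent formulation, use the unitary $U_n:H^2(\mathbb{D})\to H_n$, $f\mapsto f(z)w^n$. It is isometric because $\|fw^n\|_{L^2(\mathbb{T}^2)}=\|f\|_{L^2(\mathbb{T})}$ for the product Haar measure. The first step gives $T_z^*U_n=U_nt_z^*$. Hence $U_n^{-1}\bigl(\overline{P_n(M)}\bigr)$ is a closed $t_z^*$-invariant subspace of $H^2(\mathbb{D})$.

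The only step with real content is the coefficientwise formula for $T_z^*$, that is, the commutation of $P_n$ with $T_z^*$. It follows from the monomial computation once we check that the series $\sum_k g_kw^k$ converges in norm, which holds because $\sum_k\|g_k\|^2<\infty$. Everything after that is routine.
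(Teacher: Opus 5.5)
Your proof is correct and follows the paper's argument. Both go through the commutation $P_nT_z^*=T_z^*P_n$ (from $H_n$ reducing $T_z$), then invariance of $P_n(M)$, then transfer to one variable via $T_z^*(fw^n)=(t_z^*f)w^n$; you additionally check the commutation coefficientwise and make the passage to the closure explicit through boundedness of $T_z^*$, both of which the paper leaves implicit.
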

\begin{proof}
Since $H_n$ is a reducing subspace for $T_z$, the orthogonal projection $P_n$ commutes with $T_z^*$. Hence, for $f=P_{n}g\in P_n(M)$, with \(g\in M\),
$$ 
T_{z}^{*}f=T_{z}^{*}P_{n}g=P_{n}T_{z}^{*}g\in P_{n}(M)
$$
because $T_{z}^{*}g\in M$. Thus $P_{n}(M)$ is $T_{z}^{*}$-invariant. On the other hand, since $H_{n}=H^{2}(\mathbb{D})w^{n}$ we have
$$
T_{z}^{*}(fw^{n})=(t^{*}_{z}f)w^{n}
$$
for all $f\in H^{2}(\mathbb{D})$. So considering
$$
V_{n}=\overline{\{f\in H^{2}(\mathbb{D}):fw^{n}\in P_{n}(M\}}
$$
we obtain $t^{*}_{z}V_{n}\subset V_{n}$ whence $V_{n}$ is $t^{*}_{z}$-invariant and $\overline{P_{n}(M)}=V_{n}w^{n}$.
\end{proof}

Given $\varphi\in H^{\infty}(\mathbb{D})$ and $g\in H^{2}(\mathbb{D})$, the minimal invariant subspace for $T^{*}_{\varphi}$ that contains $g$ will be denoted by
$$
V_{T_{\varphi}^{*},g}=\overline{\operatorname{span}\{(T^{*}_{\varphi})^{n}g:n\in\mathbb{N}\}}.
$$

Let $g\in M$. In \cite{Carmo}, Do Carmo and Ferreira considered the operator $J_{g,\varphi}:\ell^{2}(\mathbb{C})\rightarrow\overline{P_{n}(M)}$ given by 
$$
J_{g,\varphi}(\beta)=\sum_{i=0}^{\infty}\beta_i((t_{\varphi}^{*})^ig_n)w^n
$$
where $\beta=(\beta_i)_{i=0}^{\infty}$ and proved the following conditions sufficient for the ISP to be true.

\begin{theorem}\label{t2}
If there exists an inner function $\varphi\in H^2(\mathbb{D})$ such that for each  $g\in H^2(\mathbb{D})$, there is an invariant subspace $U\subset V_{t^{*}_{\varphi},g}$ of $t_{\varphi}^{*}$ so that $U\cap J_{g,\varphi}(l^2(\mathbb{C}))\neq \lbrace 0\rbrace$, then the ISP is true.
\end{theorem}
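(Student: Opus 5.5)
The approach is to reduce the ISP, through Theorem \ref{teo1}, to showing that every minimal invariant subspace of a suitable universal Toeplitz adjoint is one dimensional, and then to use the hypothesis on each slice $H_n$.

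One caveat first. As literally written, the hypothesis is satisfied by $U=V_{t^{*}_{\varphi},g}$ itself, which makes it vacuous. So I read it as requiring that $U$ can be taken \emph{proper} (and nonzero) whenever $\dim V_{t^{*}_{\varphi},g}>1$. The argument below depends on this reading.

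\textbf{Setup.} Let $\varphi$ be the given inner function in the variable $z$, and assume $\varphi$ is nonconstant (otherwise $t^{*}_{\varphi}$ is a scalar and the hypothesis carries no information). Regarded as an element of $H^{\infty}(\mathbb{D}^{2})$, $\varphi$ has $|\varphi^{*}|=1$ a.e. on $\mathbb{T}^{2}$, so it is invertible in $L^{\infty}(\mathbb{T}^{2})$. It is not invertible in $H^{\infty}(\mathbb{D}^{2})$, since a nonconstant inner function has $\inf_{\mathbb{D}}|\varphi|=0$. By Theorem \ref{t1}, $T^{*}_{\varphi}$ is universal. By Theorem \ref{teo1}, it then suffices to show that every minimal $T^{*}_{\varphi}$-invariant subspace $M$ is one dimensional.

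\textbf{Projecting a minimal subspace onto the slices.} Fix such an $M$ and a nonzero $g\in M$. Minimality gives $M=V_{T^{*}_{\varphi},g}$. Because $\varphi$ depends only on $z$, each $H_n$ reduces $T_{\varphi}$, and $T^{*}_{\varphi}(fw^n)=(t^{*}_{\varphi}f)w^n$. Exactly as in the Proposition, $P_n$ commutes with $T^{*}_{\varphi}$, and
\[
\overline{P_n(M)}=V_{t^{*}_{\varphi},g_n}\,w^n .
\]
Choose $n$ with $g_n\neq 0$ and apply the hypothesis to $g_n$, giving an invariant subspace $U\subset V_{t^{*}_{\varphi},g_n}$.

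\textbf{Lifting back to $M$ (main obstacle).} Pick $\beta\in\ell^{2}(\mathbb{C})$ with $0\neq J_{g_n,\varphi}(\beta)\in Uw^n$. I want to lift this to a vector of $M$. The natural candidate is
\[
h=\sum_i\beta_i(T^{*}_{\varphi})^{i}g,
\]
which would satisfy $P_nh=J_{g_n,\varphi}(\beta)$ by the commutation relation. The difficulty is convergence of this series in $H^{2}(\mathbb{D}^{2})$: for $\beta\in\ell^2$ only, the partial sums need not converge. I would try to avoid this by using that $J_{g_n,\varphi}$ has dense range. Replace $\beta$ by finitely supported $\beta$ whose image lies close to $U$. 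Alternatively, pass to the compression $P_U$ and argue with $P_UP_n h$ for polynomial combinations $h$. If the lift succeeds, $h\in M$ is nonzero, so minimality forces $V_{T^{*}_{\varphi},h}=M$. Hence
\[
\overline{P_n(M)}=\overline{P_n(V_{T^{*}_{\varphi},h})}\subset Uw^n,
\]
so $U=V_{t^{*}_{\varphi},g_n}$. Under the reading above this contradicts the properness of $U$, so $\dim V_{t^{*}_{\varphi},g_n}=1$. That is, every nonzero $g_n$ is an eigenvector of $t^{*}_{\varphi}$, say with eigenvalue $\lambda_n$.

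\textbf{Concluding $\dim M=1$.} Now fix $\lambda=\lambda_n$ for one index $n$ with $g_n\neq 0$. The space $\overline{(T^{*}_{\varphi}-\lambda)M}$ is $T^{*}_{\varphi}$-invariant and contained in $M$, so by minimality it is either $\{0\}$ or $M$. Its $n$-th projection consists of multiples of $(\lambda_n-\lambda)g_nw^n=0$. On the other hand, $P_n(M)\neq\{0\}$ because $g_n\neq 0$. So it cannot equal $M$, and therefore $(T^{*}_{\varphi}-\lambda)M=0$. Thus $g$ is an eigenvector, and $M=V_{T^{*}_{\varphi},g}=\mathbb{C}g$ is one dimensional.
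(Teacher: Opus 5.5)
This statement is not proved in the paper. Theorem~\ref{t2} is quoted from earlier work of Do Carmo and Ferreira, and the printed version is degenerate: read literally, the hypothesis already fails at $g=0$, and for $g\neq 0$ it is met by $U=V_{t^{*}_{\varphi},g}$, as you note. So your ``proper $U$'' reading is your own repair, and there is no internal proof to compare against.

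On its own terms, the frame of your argument is sound. A nonconstant inner $\varphi$ makes $T^{*}_{\varphi}$ universal by Theorem~\ref{t1}, so Theorem~\ref{teo1} reduces the ISP to minimal subspaces. $P_n$ commutes with $T^{*}_{\varphi}$, and $\overline{P_n(M)}=V_{t^{*}_{\varphi},g_n}w^n$. Your final step, from $\overline{P_n(M)}=\mathbb{C}g_nw^n$ to $\dim M=1$ via $\overline{(T^{*}_{\varphi}-\lambda_n)M}$, is correct. But the lifting step you leave conditional is a genuine gap, and it is where all the content of the theorem lies.

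Here is what minimality actually gives you. The set $M'=\{x\in M: P_nx\in Uw^n\}$ is a closed $T^{*}_{\varphi}$-invariant subspace of $M$, and $M'=M$ would force $V_{t^{*}_{\varphi},g_n}\subset U$. So for proper $U$ you get $M'=\{0\}$, and a contradiction needs a nonzero $x\in M$ with $P_nx\in Uw^n$ \emph{exactly}. The hypothesis only gives you $0\neq J_{g_n,\varphi}(\beta)\in Uw^n$, a vector of $\overline{P_n(M)}$ that need not lie in $P_n(M)$.

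Your workarounds do not close this. Finitely supported $\beta$ whose images are merely close to $U$ give nothing, since $M'$ is closed and only exact membership counts. Worse, finite supports cannot work at all in the essential case. If, say, $g_n$ is cyclic for $t^{*}_{\varphi}$, then for every nonzero polynomial $p$ we have $p(t^{*}_{\varphi})=(t_{p^{\sharp}\circ\varphi})^{*}$ with $t_{p^{\sharp}\circ\varphi}$ injective. Hence $p(t^{*}_{\varphi})g_n$ is again cyclic and lies in no proper invariant $U$. In such cases the hypothesis is usable only with infinitely supported $\beta$.

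Then $\sum_i\beta_i(T^{*}_{\varphi})^{i}g$ need not converge. Already for $\varphi=z$ it is a Hankel matrix with $H^2(w)$-valued entries (the Taylor coefficients of $g$ in $z$) applied to $\beta$, and that is bounded only under a BMOA-type condition. Minimality of $M$ gives no bound on the partial sums $h_N$; a uniform bound would suffice, by taking a weak limit point in $M$, but you have none. The compression $P_U$ does not help either: $U$ is not reducing, and $P_UP_nh$ is in general not the $n$-th slice of any vector of $M$.

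Nor can you borrow this step from the paper. The claim after Lemma~\ref{l31} that $L_g(H^2(\mathbb{D}))\subset P_n(M)$ for $g\in P_n(M)$, reused in the proof of Proposition~\ref{a2}, is the same unproved lifting. Moreover, the boundedness asserted in Lemma~\ref{l31} is that of a Hankel operator with symbol $g$, which holds only for $g\in\mathrm{BMOA}$.

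What is missing is a reason why some nonzero $J_{g_n,\varphi}(\beta)\in Uw^n$ lies in $P_n(M)$ itself, i.e.\ why $M'\neq\{0\}$. That has to come from a sharper formulation of the hypothesis, for example one building into $J_{g,\varphi}$ that its values are slices $P_nx$ with $x\in M$, not from minimality.
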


The decomposition of $H^{2}(\mathbb{D}^{2})$ given in \eqref{eq2} plays a central role in Theorem \ref{t2}. It allows us to study a $T_z^*$-invariant subspace through its projections onto the slices $H_n$. In particular, the operators $P_n$ provide a natural way to transfer the analysis of invariant subspaces in $H^2(\mathbb D^2)$ to the one-variable Hardy space $H^2(\mathbb D)$. This slice structure is the main tool we will use to investigate minimal $T_z^*$-invariant subspaces and the relations among their projections.

\section{Main Results}

Consider the following classes of invariant subspaces
$$
\mathcal{M}=\{M\subset H^2(\mathbb{D}^2): M \ \hbox{is a minimal invariant subspace for} \ T^{*}_{z}\}
$$
and
$$
\mathcal{M}_{0}=\{M\in\mathcal{M}:\operatorname{dim}(M)=1\}.
$$

Since $T^{*}_{z}$ is an universal operator for $H^2(\mathbb{D}^2)$ if we have $\mathcal{M}\subset\mathcal{M}_{0}$, then the ISP is true. Note that to show that $\mathcal{M}\subset\mathcal{M}_{0}$ it is enough to show that if $M$ is an $T^{*}_{z}$-invariant subspace such that $\operatorname{dim}(M)\neq1$ then $M$ is non minimal.

For all $f\in H^2(\mathbb{D})$, we consider $f^{\sharp}\in H^2(\mathbb{D})$ given by
$$
f^{\sharp}(z)=\overline{f(\overline{z})}.
$$

In our first result (Proposition \ref{a2}), we show that for a minimal invariant subspace $M$ of $T_{z}^{*}$, two non-trivial projections $\overline{P_n(M)}$ and $\overline{P_m(M)}$ cannot be distinct. To this end, we need to introduce the operator $L_g$ and some of its consequences.

\begin{lemma} \label{l31}
Let $g\in H^2(\mathbb{D})$. The operator $L_{g}:H^{2}(\mathbb{D})\rightarrow H^{2}(\mathbb{D})$ given by 
$$ L_g(f)=\sum_{n=0}^{\infty}\langle t_z^{*n}g,f^{\sharp}\rangle z^n $$
is well defined.
\end{lemma}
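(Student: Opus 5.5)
The plan is to compute the Taylor coefficients of $L_g(f)$ explicitly, check that each coefficient is a finite number, and then show that the coefficient sequence is square summable, so that $L_g(f)\in H^2(\mathbb{D})$.

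\textbf{Coefficients.} Write $g(z)=\sum_{k\ge0} g_k z^k$ and $f(z)=\sum_{k\ge0} f_k z^k$. Then
$$
t_z^{*n}g=\sum_{k\ge0} g_{k+n}z^k ,\qquad f^{\sharp}(z)=\sum_{k\ge0}\overline{f_k}\,z^k .
$$
The $n$-th coefficient of $L_g(f)$ is therefore
$$
c_n=\langle t_z^{*n}g,f^{\sharp}\rangle=\sum_{k\ge0} g_{k+n}f_k .
$$
By Cauchy--Schwarz,
$$
|c_n|\le \|t_z^{*n}g\|\,\|f\| ,\qquad \|t_z^{*n}g\|^2=\sum_{k\ge n}|g_k|^2 .
$$
So each $c_n$ is a well-defined complex number, and $c_n\to0$. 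In particular $L_g(f)$ is a well-defined power series, analytic on $\mathbb{D}$, and $f\mapsto L_g(f)$ is linear.

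\textbf{Membership in $H^2(\mathbb{D})$.} The map $f\mapsto (c_n)_{n\ge0}$ is the Hankel matrix $(g_{n+k})_{n,k\ge0}$ applied to $(f_k)_{k\ge0}$. The first estimate I would try is to sum the Cauchy--Schwarz bound:
$$
\sum_{n\ge0}|c_n|^2\le \|f\|^2\sum_{n\ge0}\sum_{k\ge n}|g_k|^2=\|f\|^2\sum_{k\ge0}(k+1)|g_k|^2 .
$$
This settles the claim whenever the right-hand side is finite, with the bound $\|L_g f\|\le \|f\|\,\big(\sum_k (k+1)|g_k|^2\big)^{1/2}$. It covers, for example, polynomial $g$ and the $g$ arising from the finite spans $J_{g,\varphi}$ used in Theorem \ref{t2}.

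\textbf{Main obstacle.} For arbitrary $g\in H^2(\mathbb{D})$ the quantity $\sum_k (k+1)|g_k|^2$ can be infinite. Square summability of $(c_n)$ then has to come from cancellation in the Hankel sums rather than from termwise bounds. At this step I would try to write $c_n$ as the $n$-th Fourier coefficient of $P(\bar z\,\overline{\tilde f}\,g)$, where $\tilde f(z)=\overline{f(\bar z)}$ (equivalently $f^\sharp$ up to conjugation), and estimate its $L^2$ norm by that of the product. This is exactly where boundedness of the Hankel form enters. If it fails, I would fall back to proving the statement in the sense actually used later in the paper: $L_g(f)$ is a well-defined analytic function whose coefficients are given by the convergent inner products above. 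The $H^2$ bound would then be stated under the summability condition on $g$ obtained in the previous step.
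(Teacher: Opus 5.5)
Your proposal does not prove the lemma as stated. However, the step you flagged as the main obstacle cannot be closed by anyone, because for arbitrary $g\in H^2(\mathbb{D})$ the statement is false. Take
\[
g(z)=\sum_{k\ge0}(k+1)^{-3/4}z^k,\qquad f(z)=\sum_{k\ge0}(k+1)^{-5/8}z^k .
\]
Both lie in $H^2(\mathbb{D})$, since $\sum_k(k+1)^{-3/2}$ and $\sum_k(k+1)^{-5/4}$ converge. Then $f^\sharp=f$ and every term in your formula for $c_n$ is positive. Keeping only the indices $0\le k\le n$ gives
\[
c_n=\sum_{k\ge0}(n+k+1)^{-3/4}(k+1)^{-5/8}\ \ge\ (n+1)(2n+1)^{-3/4}(n+1)^{-5/8}\ \ge\ 2^{-3/4}(n+1)^{-3/8}.
\]
Hence $\sum_n|c_n|^2=\infty$ and $L_gf\notin H^2(\mathbb{D})$.

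Conceptually, your Cauchy--Schwarz bound shows each $c_n$ is a continuous functional of $f$. So by the closed graph theorem, $L_g$ maps $H^2(\mathbb{D})$ into $H^2(\mathbb{D})$ only if it is bounded. By Nehari's theorem (with Fefferman's duality), the Hankel matrix $(g_{n+k})_{n,k\ge0}$ is bounded exactly when $g\in\mathrm{BMOA}$.

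Your product idea is precisely the easy half of this. Here $c_n$ is the $n$-th Fourier coefficient of $g\,\overline{f^\sharp}$ (there is no factor $\bar z$), so $L_gf=P(g\,\overline{f^\sharp})$. If $\phi\in L^\infty(\mathbb{T})$ satisfies $\widehat{\phi}(m)=g_m$ for $m\ge0$, then also $L_gf=P(\phi\,\overline{f^\sharp})$ and $\|L_gf\|\le\|\phi\|_\infty\|f\|$. For general $g$ the product $g\,\overline{f^\sharp}$ is only in $L^1$, which is why that estimate cannot be completed.

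The paper's proof does not get around this either. It correctly establishes, coefficientwise, the identity $L_gf=\sum_i\beta_i(t_z^*)^ig$ for $f=\sum_i\beta_iz^i$. It then concludes by asserting that $\beta\mapsto\sum_i\beta_i(t_z^*)^ig$ is bounded on $\ell^2(\mathbb{C})$. That map is the same Hankel operator, so the step is circular and fails for the $g$ above.

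Your fallback is the right repair, with one adjustment. The purely coefficientwise statement (each $c_n$ is defined, $c_n\to0$, $L_gf$ is analytic on $\mathbb{D}$) is not the sense used later. The inclusion $L_g(H^2(\mathbb{D}))\subset P_n(M)$ and the vector $f=\sum_ih_i(T_z^*)^iu$ in Proposition \ref{a2} both require norm convergence in $H^2$. So the lemma should carry an explicit hypothesis on $g$, propagated to those later uses. Two options:
\begin{itemize}
\item your condition $\sum_k(k+1)|g_k|^2<\infty$, which makes $L_g$ Hilbert--Schmidt with the bound you wrote;
\item the sharp condition $g\in\mathrm{BMOA}$, for instance $g\in H^\infty(\mathbb{D})$.
\end{itemize}
Also, $J_{g,\varphi}$ in Theorem \ref{t2} is defined on all of $\ell^2(\mathbb{C})$, not on finite spans, so it is not covered by the polynomial case.
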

\begin{proof}
Let $g(z)=\displaystyle\sum_{i=0}^{\infty}\alpha_i z^i$. Since $(t_z^{*})^kg(z)=\displaystyle\sum_{i=0}^{\infty} \alpha_{i+k} z^i$, we have $\langle (t_z^{*})^kg,z^l\rangle=\alpha_{l+k}$ and so
$$
\langle \sum_{i=0}^{\infty}\beta_i(t_z^{*})^ig,z^l\rangle=\sum_{i=0}^{\infty} \beta_i\alpha_{l+i}.
$$

Moreover for $f(z)=\displaystyle\sum_{i=0}^{\infty}\beta_{i}z^i$ it follows that
$$
\langle (t_z^{*})^lg,f^\sharp\rangle=\langle\sum_{j=0}^{\infty} \alpha_{j+l} z^j,\sum_{i=0}^{\infty}\overline{\beta_{i}}z^i\rangle=\sum_{i=0}^{\infty} \beta_i\alpha_{l+i}
$$
therefore
$$
\langle \sum_{i=0}^{\infty}\beta_i(t_z^{*})^ig,z^l\rangle=\langle (t_z^{*})^lg,f^\sharp\rangle
$$
and so $L_{g}f(z)=\displaystyle\sum_{i=0}^{\infty}\beta_i(t_z^{*})^ig$ it is well defined since $(\beta_0,\beta_1,\ldots)\mapsto\displaystyle\sum_{i=0}^{\infty}\beta_i(t_z^{*})^ig$ is a bounded operator of $\ell{^2}(\mathbb{C})$ onto $H^2(\mathbb{D}^2)$.
\end{proof}

The operator $L_g$ introduced in previous Lemma can be viewed as the one-variable counterpart of the operator $J_{g,\varphi}$ (see Theorem \ref{t2}). Indeed, taking \(\varphi(z)=z\) and identifying the slice $H_n=H^2(\mathbb D)w^n$ with $H^2(\mathbb D)$, the restriction of \(J_{g,\varphi}\) to the \(n\)-th slice is given by
$$
J_{g,z}(\beta) = \sum_{i=0}^{\infty}\beta_i(t_z^*)^i(g_n)w^n, 
$$
whereas, if $f(z)=\displaystyle\sum_{i=0}^{\infty}\beta_i z^i$, Lemma \ref{l31} gives
$$ 
L_{g_n}(f) = \sum_{i=0}^{\infty}\beta_i(t_z^*)^i(g_n). 
$$

Thus $J_{g,z}(\beta)=L_{g_n}(f)w^n$. Consequently, under the natural identification $H_n\simeq H^2(\mathbb D)$, the operators $J_{g,z}$ and $L_{g_n}$ coincide.

Note that the Theorem \ref{t2} uses the range of $J_{g,\varphi}$ to detect a smaller invariant structure inside a slice of a $T_\varphi^*$-invariant subspace. In the present setting, $L_g$ provides precisely this mechanism for the backward shift $t_z^*$: its range is generated by the orbit $\{(t_z^*)^i g:i\geq0\}$.

In particular, if $M\subset H^2(\mathbb{D}^2)$ is an invariant subspace of $T^{*}_{z}$ and $g\in P_n(M)$ we have
$$
\sum_{i=0}^{\infty}\beta_i(t_z^{*})^ig\in P_n(M)
$$
for all $\beta=(\beta_0,\beta_1,...)\in l^2(\mathbb{C})$. Thus we have $L_{g}(H^2(\mathbb{D}))\subset P_n(M)$.

Hence, the operator $L_g$ allows us to transfer information about the cyclic subspace generated by $g$ under $t_z^*$ to the corresponding projection of the $T_z^*$-invariant subspace $M$.

The kernel of the operator $L_g$ is well described
\begin{equation}\label{eq1}
\operatorname{Ker}(L_g)=\{f\in H^2(\mathbb{D}):f^\sharp\in \overline{\operatorname{span}\{(t_{z}^{*})^{n}g:n\in\mathbb{N}\}}^\perp\}
\end{equation}
what is
$$
\operatorname{Ker}(L_g)=\{f\in H^2(\mathbb{D}):f^\sharp\in V_{t^{*}_{z},g}^{\perp}\}.
$$

The operator $L_g$ provides a useful link between the orbit of a vector in a slice and the corresponding projection of an invariant subspace. We shall use this connection to establish the following rigidity property of minimal invariant subspaces.

\begin{proposition}\label{a2}
Let $M\subset H^2(\mathbb{D}^2)$ an invariant subspace of $T^{*}_{z}$. If there are $n,m\in \mathbb{N}$ such that $\overline{P_n(M)}\neq \overline{P_m(M)}$ where both are non-trivial, then either $M\in\mathcal{M}_{0}$ or $M\notin\mathcal{M}$.
\end{proposition}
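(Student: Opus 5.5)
We prove the contrapositive: if $M\in\mathcal{M}$, then all non-trivial closures $\overline{P_n(M)}$ coincide. Since $\mathcal{M}_0\subset\mathcal{M}$, this gives the statement, because the hypothesis then forces $M\notin\mathcal{M}$. The idea is to compare each slice with $M$ itself through the compressions $P_n|_M$, rather than comparing two slices directly. We then use the Beurling description of $t_z^*$-invariant subspaces to show that each slice is determined by an annihilating function that depends only on $M$.

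\textbf{Step 1 (injectivity).} Fix $M\in\mathcal{M}$ and an index $n$ with $\overline{P_n(M)}\neq\{0\}$. As in the proof of the Proposition above, $P_n$ commutes with $T_z^*$. Hence $\ker(P_n|_M)=\{g\in M: g_n=0\}$ is a closed $T_z^*$-invariant subspace of $M$. By minimality it is either $\{0\}$ or $M$. The second option would give $P_n(M)=\{0\}$, so $X_n:=P_n|_M$ is injective. Identify $\overline{P_n(M)}$ with $V_n\subset H^2(\mathbb{D})$, and write $A:=T_z^*|_M$ and $B_n:=t_z^*|_{V_n}$. Then $X_n:M\to V_n$ is a bounded injective operator with dense range, and it satisfies
$$X_nA=B_nX_n.$$

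\textbf{Step 2 (same annihilating ideal).} For every $h$ in the disc algebra, or more generally in $H^\infty$ via the functional calculus for the contractions $A$ and $B_n$, the intertwining gives $X_nh(A)=h(B_n)X_n$. Since $X_n$ is injective and has dense range, we get
$$h(A)=0 \iff h(B_n)=0.$$
Indeed, if $h(A)=0$ then $h(B_n)$ vanishes on the dense set $X_n(M)$. Conversely, if $h(B_n)=0$ then $X_nh(A)=0$, and injectivity gives $h(A)=0$. Thus the annihilating ideal $\mathcal{I}_n=\{h\in H^\infty: h(B_n)=0\}$ equals the annihilating ideal of $A$. In particular $\mathcal{I}_n=\mathcal{I}_m$ for any two indices $n,m$ with non-trivial slices.

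\textbf{Step 3 (Beurling classification).} By Beurling's theorem, each non-zero $t_z^*$-invariant subspace $V_n$ is either all of $H^2(\mathbb{D})$ or a model space $K_{\theta_n}=H^2\ominus\theta_nH^2$ with $\theta_n$ inner. If $V_n=H^2$, then $B_n=t_z^*$ and $\mathcal{I}_n=\{0\}$, since $h(t_z^*)=t_{h^\sharp}^*$. If $V_n=K_{\theta_n}$, then $B_n=S_{\theta_n}^*$. Its minimal function is $\theta_n^\sharp$, so $\mathcal{I}_n=\theta_n^\sharp H^\infty\neq\{0\}$. Hence $\mathcal{I}_n=\mathcal{I}_m$ forces either $V_n=V_m=H^2$, or $\theta_n^\sharp H^\infty=\theta_m^\sharp H^\infty$. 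In the latter case $\theta_n$ and $\theta_m$ agree up to a unimodular constant, and so $V_n=V_m$. This contradicts $\overline{P_n(M)}\neq\overline{P_m(M)}$.

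The main obstacle is Step 3, specifically identifying the minimal function of $S_\theta^*$ correctly, including the conjugation $\sharp$ coming from $h(t_z^*)=t^*_{h^\sharp}$, and justifying the $H^\infty$ functional calculus for these $C_{0\cdot}$ contractions. I would first try to avoid the full $H^\infty$ calculus by working only with inner $h$. For these the identity $h(t_z^*)=t_{h^\sharp}^*$ is elementary. Moreover, the statement "$V_n\subset\ker t^*_{\theta^\sharp}=K_{\theta^\sharp}$ if and only if $\theta^\sharp$ annihilates $B_n$" already pins down $V_n$ as the largest model space annihilated by the ideal. Finally, the operator $L_g$ of Lemma \ref{l31} could replace $X_n$ in Step 2 if one prefers to work slice-by-slice. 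Since $L_{g_n}(H^2(\mathbb{D}))\subset P_n(M)$, its range is dense in $V_n$ whenever $g$ is cyclic for $M$, and every non-zero $g\in M$ is cyclic by minimality.
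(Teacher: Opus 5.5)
Your proof is correct, and it takes a genuinely different route from the paper's.

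The paper argues constructively. It picks $h$ with $h^\sharp\perp V_m$ and $h^\sharp\not\perp V_n$, and a cyclic $u\in M$. It then forms $f=\sum_i h_i(T_z^*)^iu=L_u(h)$ and uses $P_lL_u=L_{P_lu}$ together with \eqref{eq1} to get $P_nf\neq0=P_mf$. In other words, it exhibits a nonzero vector in $\ker(P_m|_M)$, which is exactly what your Step 1 shows cannot exist when $M$ is minimal.

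You instead turn the triviality of these kernels into quasi-affinities $P_n|_M$ intertwining $T_z^*|_M$ with $t_z^*|_{V_n}$. Every nontrivial slice then has the same $H^\infty$-annihilator, and Beurling's theorem plus the minimal function $\theta^\sharp$ of $S_\theta^*$ shows that the annihilator determines the slice.

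What each route buys:
\begin{itemize}
\item The paper's route avoids functional calculus and model spaces. However, it rests on Lemma \ref{l31}, where $L_g$ is the Hankel matrix $(\alpha_{l+i})_{l,i\ge0}$ in the coefficients of $g$. That matrix is bounded only for $g\in\mathrm{BMOA}$, so for general $u\in M$ and $h\in H^2(\mathbb{D})$ the vector $f$ is not obviously defined. The construction is safe only if $h$ is taken bounded and $f$ is read as $T^*_{h^\sharp}u$. This can be arranged, since $\theta_mH^\infty$ is dense in $V_m^\perp=\theta_mH^2$.
\item Your argument has no such convergence issue, and it gives more. Each $P_g$ also commutes with $T_z^*$, so running the same argument with $P_g|_M$ shows that all nonzero $\overline{P_g(M)}$, $g\in H^2(w)$, coincide. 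That is Theorem \ref{t3} with a single $V$ for all $g$, obtained without Lemma \ref{a3} or the case analysis.
\end{itemize}

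Two side remarks need correcting, though your main argument does not use them. First, $\theta^\sharp(B_n)=t_\theta^*|_{V_n}$, so $\theta^\sharp$ annihilates $B_n$ iff $V_n\subset K_\theta$, not $K_{\theta^\sharp}$. Second, $L_{g_n}$ cannot replace $X_n$. It satisfies $L_{g_n}t_z=t_z^*L_{g_n}$, so it intertwines the forward shift (not $A$) with $B_n$, and it is in general neither injective nor bounded.
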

\begin{proof}
Since $\overline{P_n(M)}\neq \overline{P_m(M)}$ we can choose $0\neq h\in H^2(\mathbb{D})$ with $h^{\sharp}\in \overline{P_m(M)}^{\perp}$ but $h^{\sharp}\notin\overline{P_n(M)}^{\perp}$.

Once $\overline{P_n(M)}$ and $\overline{P_m(M)}$ are non null, for each $0\neq g\in P_n(M)$, there are $u\in M$, with $P_n(u)=g$ and $P_m(u)\neq0$ or $M\notin\mathcal{M}$. 

We assume that $\overline{P_n(M)}=\overline{\operatorname{span}\{(t^{*}_z)^{l}g:l\in\mathbb{N}\}}$. Because 
$$
h^\sharp\notin  \overline{P_n(M)}^\perp=\overline{\operatorname{span}\{ (t^{*}_z)^{l}g:l\in\mathbb{N}\}}^\perp
$$
it follows from \eqref{eq1} that $h\notin\operatorname{Ker}(L_g)$ and therefore $L_g(h)\neq 0$. 

On the other hand, we have that $h^{\sharp}\in \overline{P_m(M)}^{\perp}$ and since $P_{m}(M)$ is $t^{*}_{z}$-invariant, that is
$$
(t^{*}_{z})^{l}P_{m}(u)\in P_{m}(M)
$$
it follows that
$$
\langle(t^{*}_{z})^{l}P_{m}(u),h^{\sharp}\rangle=0
$$
and again from \eqref{eq1} we have $L_{P_m(u)}(h)=0$.

Now for $h(z)=\displaystyle\sum_{i=0}^{\infty}h_{i}z^{i}$ considering 
$$
f=\sum_{i=0}^{\infty}h_i(T_{z}^{*})^{i}u=L_{u}(h)
$$
once $P_{l}$ commutes with $T^{*}_{z}$ it follows that
$$
P_{l}(f)=P_{l}\left(\sum_{i=0}^{\infty}h_i(T_{z}^{*})^{i}u\right)=\sum_{i=0}^{\infty}h_{i}(T^{*}_{z})^{i}P_{l}(u)
$$
and so $P_{l}f=L_{P_l(u)}(h)$ by Lemma \ref{l31}. Therefore for $l=n$, then $P_n(f)=L_g(h)\neq 0$ and for $l=m$, then $P_m(f)=L_{P_m(u)}h=0$ hence $f\neq 0$ and so $M\notin \mathcal{M}$.
\end{proof}

The previous proposition yields a rigidity property for the projections of minimal invariant subspaces. In particular, if $M$ is an infinite-dimensional minimal $T_z^*$-invariant subspace, then all nontrivial projections $P_n(M)$ coincide with a single $t_z^*$-invariant subspace of $H^2(\mathbb D)$.

\begin{corollary}
If $M\in\mathcal{M}$ and $\operatorname{dim}(M)=\infty$, then there exists an infinite dimensional $t^{*}_{z}$-invariant subspace $V\subset H^{2}(\mathbb{D})$ such that, for every $n\in \mathbb{N}$, either 
$$
P_n(M)=\lbrace 0\rbrace \ \text{or} \ \overline{P_n(M)}=V.
$$
\end{corollary}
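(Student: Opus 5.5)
The plan is to derive the corollary from Proposition \ref{a2} applied to the given $M$, together with the proposition showing each $\overline{P_n(M)}$ is $T_z^*$-invariant.

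\textbf{Step 1: there is a nonzero slice, and all nonzero slices agree.} Since $M\neq\{0\}$ and $H^2(\mathbb{D}^2)=\bigoplus_n H_n$, some $P_{n_0}(M)\neq\{0\}$. Because $\dim(M)=\infty$, we have $M\notin\mathcal{M}_0$, while $M\in\mathcal{M}$ by hypothesis. Proposition \ref{a2} therefore says we cannot have two indices $n,m$ with $\overline{P_n(M)}$ and $\overline{P_m(M)}$ both nontrivial and distinct. So every nonzero $\overline{P_n(M)}$ equals $\overline{P_{n_0}(M)}$, where we regard both as subspaces of $H^2(\mathbb{D})$ via the identification $H_n\simeq H^2(\mathbb{D})$, as Proposition \ref{a2} implicitly does.

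\textbf{Step 2: define $V$.} Set $V=V_{n_0}$, the subspace from the earlier proposition with $\overline{P_{n_0}(M)}=V_{n_0}w^{n_0}$. That proposition shows $V$ is $t_z^*$-invariant. This settles the dichotomy in the statement.

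\textbf{Step 3: $V$ is infinite dimensional.} I expect this to be the main obstacle. Suppose $\dim V<\infty$. Then $t_z^*|_V$ is an operator on a nonzero finite-dimensional space, so it has an eigenvector $k_\lambda\in V$. Eigenvectors of $t_z^*$ are Szegő kernels, so $k_\lambda=(1-\bar\lambda z)^{-1}$ up to a scalar. I then want to produce a one-dimensional or otherwise proper invariant subspace of $M$, contradicting minimality together with $\dim M=\infty$.

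First, $T_z^*|_M$ has no eigenvector. An eigenvector would span a one-dimensional invariant subspace of $M$, and minimality would then force $\dim M=1$.

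Next, consider the finite-dimensional nonzero space $V$ and the polynomial $p$ with $p(t_z^*)|_V=0$, namely the minimal polynomial of $t_z^*|_V$. Because $P_n$ commutes with $T_z^*$, for every $u\in M$ and every $n$ we get
\[
P_n\,p(T_z^*)u=p(T_z^*)P_nu=0,
\]
since each nonzero slice lies in $V w^n$. Hence $p(T_z^*)M=\{0\}$. Factor $p=\prod_j(x-\lambda_j)$ and choose a smallest partial product $q$ with $q(T_z^*)M\neq 0$. Then there is a nonzero vector $q(T_z^*)u$ killed by $T_z^*-\lambda$ for some root $\lambda$, which is an eigenvector of $T_z^*$ in $M$. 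This contradicts the previous paragraph, so $\dim V=\infty$.

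The delicate point is to justify that all slices sit inside the same $V$ with uniform identification, so that $p(T_z^*)$ annihilates all of $M$. Step 1 gives exactly this.
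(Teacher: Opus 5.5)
Your proof is correct. Steps 1 and 2 are exactly the paper's argument: pick $n_0$ with $P_{n_0}(M)\neq\{0\}$, apply Proposition \ref{a2} (read, as you rightly note, under the identification $H_n\simeq H^2(\mathbb{D})$) to get the dichotomy, and set $V=\overline{P_{n_0}(M)}$.

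The real difference is Step 3. The paper only writes ``Moreover $V$ is infinite dimensional'' and gives no justification. You actually prove it, and the argument is sound. All nonzero slices lie in $Vw^n$ for the same $V$, so the minimal polynomial $p$ of $t_z^*|_V$ gives $p(T_z^*)M=\{0\}$. Peeling off linear factors then produces an eigenvector of $T_z^*$ inside $M$. That eigenvector spans a one-dimensional invariant subspace, which contradicts minimality together with $\dim M=\infty$.

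One wording fix: the partial product $q$ you want is the \emph{longest} one with $q(T_z^*)M\neq\{0\}$, i.e.\ the one just before the first partial product that annihilates $M$. The ``smallest'' such $q$ is $q=1$, which does not work.

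There is also a shorter route that does not need Step 1. The subspace $M\cap H_{n_0}^{\perp}=\operatorname{Ker}(P_{n_0}|_M)$ is
\begin{itemize}
\item closed and $T_z^*$-invariant, since $H_{n_0}^{\perp}$ reduces $T_z$;
\item proper in $M$, since $P_{n_0}(M)\neq\{0\}$;
\item of codimension $\dim P_{n_0}(M)\le\dim V$ in $M$.
\end{itemize}
If $\dim V<\infty$ while $\dim M=\infty$, this subspace is nonzero, which already contradicts minimality. This route shows more generally that for minimal $M$ each $P_n|_M$ is either zero or injective. Your argument has the merit of exhibiting an explicit eigenvector.
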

\begin{proof}
Let $0\neq g\in M$. By decomposition \eqref{eq1}, there exists $n_{0}\in\mathbb{N}$ such that $P_{n_{0}}(g)\neq0$ and so $P_{n_{0}}(M)\neq\{0\}$. Since $M\in\mathcal{M}$ and $\operatorname{dim}(M)=\infty$, Proposition \ref{a2} implies that for every $n\in\mathbb{N}$
$$
P_{n}(M)=\lbrace 0\rbrace \ \text{or} \ \overline{P_{n}(M)}=\overline{P_{n_{0}}(M)}.
$$

Define $V=\overline{P_{n_{0}}(M)}$. Since $P_{n_{0}}(M)$ is $t^{*}_{z}$-invariant, $V$ is as well. Moreover $V$ is infinite dimensional. 

Thus either $P_{n}(M)=\{0\}$ or $\overline{P_{n}(M)}=\overline{P_{n_{0}}(M)}=V$, as desired.
\end{proof}

For each non-zero $g\in H^{2}(w)$, let $P_{g}:H^2(\mathbb{D}^2)\rightarrow H^{2}(z)$ be the operator defined by
$$
P_{g}(f)(z)=\int_{\mathbb{T}}f(z,w)\overline{g}(w)dw=\langle f,g\rangle_{w}(z)
$$
for $f\in H^2(\mathbb{D}^2)$, where the subscript under the inner product denotes the variable of integration.

The operator $P_g$ allows us to combine the information contained in the coordinate projections $P_i$. The following lemma makes this relationship explicit and will be useful in comparing projections associated with different vectors in $H^2(w)$.

\begin{lemma}
Let $g(w)=\displaystyle\sum_{i=0}^{\infty}a_iw^i\in H^2(w)$. For each $f(z,w)\in H^2(\mathbb{D}^2)$ we have
$$
P_g(f)(z)=\sum_{i=0}^{\infty}\overline{a_i}P_i(f)(z).
$$
\end{lemma}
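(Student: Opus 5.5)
The plan is to expand $f$ in its Taylor series with respect to $w$ and integrate term by term against $\overline{g}$ on $\mathbb{T}$, using the orthonormality of $\{w^i\}$. Write $f(z,w)=\sum_{i=0}^{\infty}f_i(z)w^i$, with $f_i(z)=\int_{\mathbb{T}}f(z,w)\overline{w^i}\,dw$ as in the introduction. Then $P_i(f)=f_iw^i$. Under the identification $H_i=H^2(z)w^i$ used in the Proposition of Section 1, $P_i(f)(z)$ is read as the coefficient function $f_i(z)$. I will adopt this reading so that both sides of the claimed identity are functions of $z$ alone.

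First I work at a fixed $z\in\mathbb{D}$. The function $w\mapsto f(z,w)$ lies in $H^2(w)$, with coefficients $f_i(z)$. I will justify this by noting that $|f_i(z)|\le \|f_i\|_{H^2(z)}(1-|z|^2)^{-1/2}$ and $\sum_i\|f_i\|^2<\infty$. Since $\overline{g}(w)=\sum_i\overline{a_i}\,\overline{w^i}$ on $\mathbb{T}$, the integral defining $P_g(f)(z)$ is the $H^2(w)$ inner product $\langle f(z,\cdot),g\rangle$. Parseval then gives
$$
P_g(f)(z)=\sum_{i=0}^{\infty}f_i(z)\overline{a_i}=\sum_{i=0}^{\infty}\overline{a_i}P_i(f)(z).
$$
The series converges absolutely, by Cauchy--Schwarz, because $(f_i(z))_i$ and $(a_i)_i$ are both in $\ell^2$.

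It remains to check that the series also converges in $H^2(z)$, so that the identity holds as elements of $H^2(z)$ and not merely pointwise. The partial sums $\sum_{i\le N}\overline{a_i}f_i$ form a Cauchy sequence in $H^2(z)$, since
$$
\Big\|\sum_{i=N}^{M}\overline{a_i}f_i\Big\|\le\Big(\sum_{i=N}^{M}|a_i|^2\Big)^{1/2}\Big(\sum_{i=N}^{M}\|f_i\|^2\Big)^{1/2}.
$$
Norm convergence in $H^2(z)$ implies pointwise convergence on $\mathbb{D}$, so the $H^2$ limit agrees with the pointwise sum computed above.

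The main obstacle is only the interchange of sum and integral, that is, making sense of $f(z,w)$ for fixed $z$ as a boundary function in $w$. The Parseval step above sidesteps this. Alternatively, one can view $P_g$ as the bounded operator $f\mapsto\sum_i\overline{a_i}f_i$, verify the identity on polynomials, and extend by density.
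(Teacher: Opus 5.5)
Your proof is correct and takes essentially the same route as the paper: expand $f$ in powers of $w$, use the orthonormality of $\{w^i\}$ on $\mathbb{T}$ to obtain $\sum_i \overline{a_i} f_i(z)$, and identify $f_i$ with $P_i(f)$. The paper integrates the double series term by term without comment, while your Parseval argument at fixed $z\in\mathbb{D}$ and your check of convergence in $H^2(z)$ supply the justification the paper leaves out.
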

\begin{proof}
If $\displaystyle f(z,w)=\sum_{n=0}^{\infty}f_n(z)w^n$, we have 
$$ \begin{array}{rcl}
P_{w^i}(f)(z)&=&\displaystyle\int_{\mathbb{T}}f(z,w)\overline{w^i}dw  \\
&=&\displaystyle\int_{\mathbb{T}}\sum_{n=0}^{\infty}f_n(z)w^n\overline{w^i}dw\\
             &=&f_{i}(z)\\
             &=&P_{i}f(z,w).
    \end{array}
$$

On the other hand, for $\displaystyle g(w)=\sum_{i=0}^{\infty}a_{i}w^{i}$ it follows that
$$
\begin{array}{rcl}
P_{g}(f)(z)&=&\displaystyle\int_{\mathbb{T}}f(z,w)\overline{g}(w)dw  \\
&=&\displaystyle\int_{\mathbb{T}}\sum_{n=0}^{\infty} f_n(z)w^n\left(\sum_{i=0}^{\infty}\overline{a_{i}w^{i}}\right)dw\\
&=&\displaystyle\int_{\mathbb{T}}\sum_{n=0}^{\infty} \sum_{i=0}^{\infty}f_n(z)w^n\overline{a_i}\overline{w^i}dw\\
&=&\displaystyle\sum_{i=0}^{\infty} \overline{a_i}f_{i}(z)\\
&=&\displaystyle\sum_{i=0}^{\infty} \overline{a_i}P_{i}f(z,w)
\end{array}
$$
as desired.
\end{proof}

The preceding lemma shows that the operator $P_g$ can be expressed in terms of the coordinate projections $P_i$. We next use this fact to compare projections associated with orthogonal vectors in $H^2(w)$ and derive a rigidity property for minimal invariant subspaces.

\begin{lemma}\label{a3}
Let $g, h\in H^2(w)$ be orthogonal and let $M\subset H^2(\mathbb{D}^2)$ be a $T^{*}_{z}$-invariant subspace. If both $P_g(M)$ and $P_h(M)$ are nonzero and  $\overline{P_g(M)}\neq \overline{P_h(M)}$, then either $\dim(M)=1$ or $M\notin\mathcal{M}$.
\end{lemma}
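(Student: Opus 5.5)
The plan is to copy the proof of Proposition \ref{a2}, with the coordinate projections $P_n,P_m$ replaced by $P_g,P_h$. Everything needed has already been shown for $P_n$, so the first step is to check the analogous facts for $P_g$.

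\textbf{Step 1: $P_g$ intertwines the shifts.} By the preceding lemma, $P_g=\sum_i \overline{a_i}P_i$, with the sum converging strongly (Cauchy--Schwarz bounds it by $\|g\|\,\|f\|$). Each $P_i$ commutes with $T_z^*$, and $T_z^*(f_iw^i)=(t_z^*f_i)w^i$. Hence
$$P_g T_z^* f = t_z^* P_g f \quad \text{for all } f\in H^2(\mathbb{D}^2).$$
Two consequences follow. First, $\overline{P_g(M)}$ and $\overline{P_h(M)}$ are $t_z^*$-invariant subspaces of $H^2(z)$. Second, for $u\in M$ and $k(z)=\sum_i k_iz^i\in H^2(\mathbb{D})$, set $f=\sum_i k_i(T_z^*)^iu$. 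Then $f\in M$, and
$$P_g f=L_{P_g(u)}(k),\qquad P_h f=L_{P_h(u)}(k),$$
by the computation in Lemma \ref{l31}.

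\textbf{Step 2: choose the test function.} Assume $M\in\mathcal{M}$ and $\dim M\neq 1$; I want a contradiction. Since $\overline{P_g(M)}\neq\overline{P_h(M)}$, after relabelling $g,h$ I can pick $k$ with $k^\sharp\perp \overline{P_h(M)}$ but $k^\sharp\not\perp\overline{P_g(M)}$. (The relabelling covers the case where the inclusion goes the other way.) Then for every $u\in M$, the vector $P_h(u)$ lies in the invariant subspace $\overline{P_h(M)}$. By \eqref{eq1}, $L_{P_h(u)}(k)=0$, so $P_h f=0$. Since $k^\sharp\not\perp P_g(M)$, there is some $u\in M$ with $\langle (t_z^*)^l P_g(u),k^\sharp\rangle\neq 0$ for some $l$. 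Indeed, if this failed for all $u$ and $l$, then taking $l=0$ would give $k^\sharp\perp P_g(M)$. For this $u$ we get $P_g f=L_{P_g(u)}(k)\neq 0$.

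\textbf{Step 3: contradict minimality.} The vector $f$ lies in $M$ and is nonzero. The cyclic subspace $N$ it generates under $T_z^*$ is invariant and contained in $M$. By Step 1, every element of $N$ has $P_h$-image zero, so $P_h(N)=\{0\}$. On the other hand $P_h(M)\neq\{0\}$ by hypothesis, so $N\subsetneq M$. This contradicts minimality, so $M\notin\mathcal{M}$.

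I expect the main obstacle to be Step 1, namely making sure that $P_g$ genuinely intertwines $T_z^*$ with $t_z^*$. This needs the infinite sum $\sum_i\overline{a_i}P_i$ to commute with $T_z^*$, which I would justify by boundedness and strong convergence. In this version the orthogonality of $g$ and $h$ is not actually used. It may serve only to guarantee that the two projections are not trivially related, for instance by proportionality.
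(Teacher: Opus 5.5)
Your proof is correct and takes a different route from the paper. The paper does not rerun the $L_g$ argument for $P_g,P_h$. It uses $g\perp h$ to extend $g/\|g\|,\ h/\|h\|$ to an orthonormal basis $\{u_i\}$ of $H^2(w)$ and builds the unitary $\Phi(x)=\sum_i P_{u_i}(x)w^i$, in effect $I\otimes U$ with $Uu_i=w^i$. Since $\Phi$ commutes with $T_z^*$, $\Phi(M)$ is minimal exactly when $M$ is and has the same dimension. Moreover $P_0\Phi=P_g/\|g\|$ and $P_1\Phi=P_h/\|h\|$, so Proposition \ref{a2} applied to $\Phi(M)$ with $n=0$, $m=1$ finishes.

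That reduction is shorter and reuses Proposition \ref{a2} verbatim, and it is the only place orthogonality is needed. Your direct argument uses only that $P_g,P_h$ are bounded and intertwine $T_z^*$ with $t_z^*$, so it proves the lemma for arbitrary $g,h$. You are right that orthogonality plays no role, though not for the reason you suggest: if $h=cg$, the hypothesis $\overline{P_g(M)}\neq\overline{P_h(M)}$ simply fails. This strengthening makes Corollary \ref{a1} and the case analysis in the proof of Theorem \ref{t3} superfluous, since for $M\in\mathcal{M}$ with $\dim M\neq1$ all nonzero $\overline{P_g(M)}$ coincide at once.

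One caveat on where the real difficulty lies. The intertwining in Step 1 is routine, since $P_g$ is just $I\otimes\langle\cdot,g\rangle$. The delicate point is the existence of $f=\sum_i k_i(T_z^*)^iu$, which you take from Lemma \ref{l31}. If $v=\sum_j\alpha_jz^j$, the map $k\mapsto\sum_ik_i(t_z^*)^iv$ has the Hankel matrix $(\alpha_{l+i})_{l,i\ge0}$. That matrix is bounded on $\ell^2$ only when $v\in\mathrm{BMOA}$ (Nehari--Fefferman). So for general $v$ and $k\in H^2$ the series need not converge, and the boundedness claim in Lemma \ref{l31}, also used in Proposition \ref{a2}, is too strong.

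Your proof is easily repaired:
\begin{itemize}
\item By Beurling, $\overline{P_h(M)}^{\perp}=\theta H^2$ with $\theta$ inner. This subspace is nonzero because $\overline{P_g(M)}\not\subset\overline{P_h(M)}$.
\item Since $\theta\,\mathbb{C}[z]$ is dense in $\theta H^2$, you may take $k^\sharp=\theta p$ with $p$ a polynomial.
\item Set $f=T^*_{\theta p}u$, with the Toeplitz operator acting in $z$. This lies in $M$ because $T^*_{\theta p}$ is a weak-operator limit of polynomials in $T_z^*$ (via Fej\'er means of $\theta p$) and $M$ is weakly closed.
\item Then $P_gf=t^*_{\theta p}P_gu$ and $P_hf=t^*_{\theta p}P_hu$. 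Their $l$-th Taylor coefficients are $\langle (t_z^*)^lP_gu,\theta p\rangle$ and $\langle (t_z^*)^lP_hu,\theta p\rangle$, exactly as you use them.
\end{itemize}
The rest of your argument then goes through unchanged.
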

\begin{proof}
Since $g\perp h$ there exists an orthonormal basis $\lbrace u_0,u_1,...\rbrace$ of $H^2(w)$ such that
$$
u_0=\frac{g}{\| g\|} \ \mbox{and} \ u_1=\frac{h}{\| h\|}.
$$

For $x\in H^2(\mathbb{D}^2)$, write $\displaystyle x=\sum_{i=0}^{\infty} P_{u_i}(x)u_i$ and define the unitary operator
$$
\Phi(x)=\sum_{i=0}^{\infty} P_{u_i}(x)w^i.
$$

Since $P_{u_{i}}$ commutes with $T^{*}_{z}$, we have
$$
\Phi (T_{z}^{*}x)=\sum_{i=0}^{\infty}P_{u_{i}}(T^{*}_{z}x)w^{i}=\sum_{i=0}^{\infty}t^{*}_{z}P_{u_{i}}(x)w^{i}=T_{z}^{*}\Phi(x)
$$
what is $\Phi T_{z}^{*}=T_{z}^{*}\Phi$. In particular, $\Phi(M)$ is a $T^{*}_{z}$-invariant subspace and
$$
M\in\mathcal{M}\iff\Phi(M)\in\mathcal{M} \ \text{and} \ \operatorname{dim}\Phi(M)=\operatorname{dim}M
$$
and the result follows from Proposition \ref{a2}.
\end{proof}

The previous lemma shows that, for a minimal invariant subspace, two distinct nontrivial projections associated with orthogonal vectors cannot occur. The following corollary extends this rigidity to multiple orthogonal directions.

\begin{corollary}\label{a1}
Let $f,g,h\in H^2(w)$ satisfy $f\perp h$ and $g\perp h$. If $M\in\mathcal{M}$ and $P_f(M)$, $P_g(M)$ and $P_h(M)$ are nonzero, then $\overline{P_f(M)}=\overline{P_g(M)}=\overline{P_h(M)}$.
\end{corollary}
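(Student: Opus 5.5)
Apply Lemma \ref{a3} twice, to the orthogonal pairs $(f,h)$ and $(g,h)$, and conclude by transitivity through $\overline{P_h(M)}$. Since $f\perp h$ and both $P_f(M)$ and $P_h(M)$ are nonzero, Lemma \ref{a3} says that $\overline{P_f(M)}\neq\overline{P_h(M)}$ would force either $\dim(M)=1$ or $M\notin\mathcal{M}$. We assume $M\in\mathcal{M}$, so in the generic case $\dim(M)\neq 1$ this yields $\overline{P_f(M)}=\overline{P_h(M)}$. The same argument with $g\perp h$ gives $\overline{P_g(M)}=\overline{P_h(M)}$. Chaining the two equalities through $\overline{P_h(M)}$ gives $\overline{P_f(M)}=\overline{P_g(M)}=\overline{P_h(M)}$. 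This is the reason for the hypothesis $f\perp h$, $g\perp h$: no orthogonality between $f$ and $g$ is needed.

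The remaining step is the case $\dim(M)=1$, which Lemma \ref{a3} does not cover. Here I would argue directly. If $M\in\mathcal{M}_0$, then $M=\operatorname{span}\{u\}$ with $T_z^*u=\lambda u$ for some $|\lambda|<1$. Because $P_f$ is given by integration in the $w$-variable, $P_f$ commutes with $T_z^*$, so $t_z^*P_f(u)=\lambda P_f(u)$. Hence $P_f(u)$ is either $0$ or a nonzero multiple of the one-variable eigenvector $k_{\bar\lambda}$ (the Szeg\H{o} kernel). The eigenspace of $t_z^*$ for $\lambda$ is one-dimensional, since $t_z^*$ is the backward shift. So every nonzero projection $P_f(M)$, $P_g(M)$, $P_h(M)$ equals $\operatorname{span}\{k_{\bar\lambda}\}$, and the three closures coincide. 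The hypothesis that all three projections are nonzero is exactly what makes this go through.

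I expect no serious obstacle. The only care needed is to cover both alternatives in the conclusion of Lemma \ref{a3}: the alternative $M\notin\mathcal{M}$ is excluded by hypothesis, and the alternative $\dim(M)=1$ is handled by the eigenvector computation above.
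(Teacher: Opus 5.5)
Your argument is the paper's: apply Lemma~\ref{a3} to the orthogonal pairs $(f,h)$ and $(g,h)$, then chain the two equalities through $\overline{P_h(M)}$. Your extra treatment of $\dim(M)=1$ is correct: $P_f$ intertwines $T_z^*$ with $t_z^*$, so every nonzero projection equals $\operatorname{span}\{k_{\bar\lambda}\}$. It covers a case the paper's one-line proof skips, since Lemma~\ref{a3} as stated leaves that alternative open.
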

\begin{proof}
Since $f\perp h$ and both $P_f(M)$ and $P_h(M)$ are nonzero, Lemma \ref{a3} implies that $\overline{P_f(M)}=\overline{P_h(M)}$ for a minimal invariant subspace $M$. Simirlarly $\overline{P_g(M)}=\overline{P_h(M)}$.
\end{proof}

The preceding results establish a strong rigidity property for the projections of minimal $T_z^*$-invariant subspaces. We now extend this property from the coordinate projections $P_n$ to the more general operators $P_g$, associated with arbitrary vectors $g\in H^2(w)$. The following theorem to shows that all nonzero projections of a minimal invariant subspace determine the same $t_z^*$-invariant subspace of $H^2(\mathbb D)$.

\begin{theorem}\label{t3}
If $M\in\mathcal{M}$, then for every $g\in H^2(w)$ there exists a $t_z^*$-invariant subspace $V\subset H^2(\mathbb{D})$ such that either $P_g(M)=\{0\}$ or $\overline{P_g(M)}=V$.
\end{theorem}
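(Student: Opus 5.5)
We read the statement in its intended (non-trivial) form: a single $t_z^*$-invariant subspace $V\subset H^2(\mathbb D)$ such that $\overline{P_g(M)}=V$ for every $g\in H^2(w)$ with $P_g(M)\neq\{0\}$. Taken literally, with $V$ allowed to depend on $g$, the claim is immediate from $V=\overline{P_g(M)}$, since $P_g$ commutes with $T_z^*$. The plan is to fix $M\in\mathcal M$ and two vectors $f,g\in H^2(w)$ with $P_f(M)$ and $P_g(M)$ nonzero, and to prove $\overline{P_f(M)}=\overline{P_g(M)}$. Then $V:=\overline{P_f(M)}$ works for all $g$. If $\dim M=1$ all nonzero $P_g(M)$ are spanned by multiples of one vector, so they coincide trivially; hence we may assume $\dim M>1$.

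\emph{Generic case.} We first look for a vector $h\in H^2(w)$ with $h\perp f$, $h\perp g$ and $P_h(M)\neq\{0\}$. If such an $h$ exists, Corollary \ref{a1} applies directly and gives $\overline{P_f(M)}=\overline{P_h(M)}=\overline{P_g(M)}$.

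\emph{Degenerate case.} Suppose instead that $P_h(M)=\{0\}$ for every $h\perp\operatorname{span}\{f,g\}$. Since $P_h(x)=\langle x,h\rangle_w$, this means $M\subset H^2(z)\otimes\operatorname{span}\{f,g\}$. Choose an orthonormal basis $u_0=f/\|f\|,\,u_1$ of $\operatorname{span}\{f,g\}$ (if $g$ is parallel to $f$ there is nothing to prove). Then every $x\in M$ has the form $x=P_{u_0}(x)u_0+P_{u_1}(x)u_1$.
\begin{itemize}
\item If $P_{u_1}(M)=\{0\}$, then $M\subset H^2(z)\,u_0$. In this case $P_g(x)=\overline{\langle g,u_0\rangle}\,P_{u_0}(x)$, and $\langle g,u_0\rangle\neq 0$ because $P_g(M)\neq\{0\}$. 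So $\overline{P_g(M)}=\overline{P_f(M)}$.
\item Otherwise, Lemma \ref{a3} applied to $u_0\perp u_1$ gives $\overline{P_{u_0}(M)}=\overline{P_{u_1}(M)}=:W$, and $W=\overline{P_f(M)}$. Since $P_g=\alpha P_{u_0}+\beta P_{u_1}$, we get $P_g(M)\subset W$, so $\overline{P_g(M)}\subseteq W$.
\end{itemize}

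\emph{Reverse inclusion.} To show $\overline{P_g(M)}=W$ we use minimality. The set $\ker P_g\cap M$ is a closed $T_z^*$-invariant subspace of $M$, because $P_g$ commutes with $T_z^*$. It is not all of $M$, so by minimality it is $\{0\}$, and $P_g|_M$ is injective. We would then transport $M$ by the unitary $\Phi$ of Lemma \ref{a3} built from an orthonormal basis $v_0=g/\|g\|,\,v_1$ of $\operatorname{span}\{f,g\}$. If $P_{v_1}(M)\neq\{0\}$, Lemma \ref{a3} gives $\overline{P_g(M)}=\overline{P_{v_1}(M)}$. Combining this with $P_{v_1}=\gamma P_{u_0}+\delta P_{u_1}$ and repeating the symmetric argument with the roles of $f$ and $g$ exchanged should force $\overline{P_g(M)}=W$. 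If instead $P_{v_1}(M)=\{0\}$, we are back in the one-direction situation of the first bullet, and equality follows as there.

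The main obstacle is this degenerate two-dimensional case, specifically the reverse inclusion $W\subseteq\overline{P_g(M)}$ when $g$ is neither parallel nor orthogonal to $f$. Corollary \ref{a1} does not apply there, and a linear combination of two maps with the same closed range need not have dense range. If the symmetric use of Lemma \ref{a3} does not close the gap, the fallback is to run the vector construction from the proof of Proposition \ref{a2} directly inside $H^2(z)\otimes\operatorname{span}\{f,g\}$. We would pick $k$ with $k^\sharp\perp\overline{P_g(M)}$ but $k^\sharp\not\perp W$, and show that $\sum_i k_i (T_z^*)^i u$ is a nonzero element of $M$ annihilated by $P_g$. That would contradict the injectivity of $P_g|_M$ established above.
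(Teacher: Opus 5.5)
Your argument is correct. The step you flag as the main obstacle is already closed by the symmetric argument you mention, and no statement about dense ranges is needed.

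Take $v_0=g/\|g\|$ and $v_1\perp v_0$ in $\operatorname{span}\{f,g\}$.
\begin{itemize}
\item If $P_{v_1}(M)\neq\{0\}$, Lemma \ref{a3} gives $\overline{P_{v_0}(M)}=\overline{P_{v_1}(M)}=\overline{P_g(M)}$. Since $f\in\operatorname{span}\{v_0,v_1\}$, the map $P_f$ is a linear combination of $P_{v_0}$ and $P_{v_1}$. Hence $P_f(M)\subseteq\overline{P_g(M)}$, that is, $W\subseteq\overline{P_g(M)}$.
\item If $P_{v_1}(M)=\{0\}$, then $P_f|_M$ is a nonzero multiple of $P_g|_M$.
\end{itemize}
Together with your forward inclusion this gives $\overline{P_f(M)}=\overline{P_g(M)}$. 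So you can drop both the injectivity remark and the fallback.

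The fallback would need care anyway. For an arbitrary $k\in H^2$, the slices of $\sum_i k_i(T_z^*)^iu$ are given by Hankel matrices and need not lie in $H^2$. With $k^\sharp$ inner, the vector is $T^*_{k^\sharp}u\in M$, and the argument works.

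Your first bullet also does not need $M\subset H^2(z)\otimes\operatorname{span}\{f,g\}$. The identity $P_g=\alpha P_{u_0}+\beta P_{u_1}$ holds on all of $H^2(\mathbb{D}^2)$, so $P_{u_1}(M)=\{0\}$ already forces $P_g|_M=\alpha P_{u_0}|_M$ with $\alpha\neq0$. Hence the two-basis argument handles every pair $f,g$, and your generic case via Corollary \ref{a1} is redundant.

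For $\dim M=1$, say why the lines coincide. $M$ is spanned by an eigenvector $u$ with $T_z^*u=\lambda u$, so every $P_gu$ lies in the one-dimensional space $\ker(t_z^*-\lambda)$.

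The paper takes a different route. It never compares two arbitrary directions; everything is anchored to the coordinate slices.
\begin{itemize}
\item It splits according to whether some $P_n(M)$ vanishes.
\item In each case it builds an auxiliary $q=\lambda_1w^k+\lambda_2w^n\perp g$ whose projection is nonzero and controlled by slice projections. In Case 1 it uses a slice with $P_n(M)=\{0\}$, where perturbing $g$ by $\lambda w^n$ leaves $P_g|_M$ unchanged. In Case 2 it uses two slices on which a cyclic vector has linearly independent components.
\item Corollary \ref{a1} then transfers $\overline{P_g(M)}$ to $\overline{P_q(M)}$ and on to a slice closure $\overline{P_k(M)}$. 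Uniformity in $g$ rests on the nonzero slice closures agreeing, via Proposition \ref{a2} and its corollary.
\end{itemize}
That route identifies $V$ concretely as the common closure of the nonzero slice projections. The cost is a case analysis with some subcases left implicit, for instance $g\perp w^{n_1}$ in Case 2, which Lemma \ref{a3} handles directly. Your route is coordinate-free and shorter: conjugate-linearity of $u\mapsto P_u$ and Lemma \ref{a3} on two-dimensional subspaces of $H^2(w)$ suffice. Uniformity of $V$ then follows directly from pairwise equality.
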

\begin{proof}
Fix $g\in H^2(w)$. If $P_g(M)=\{0\}$ there is nothing to prove. Hence assume $P_g(M)\neq \lbrace 0\rbrace$. We divide the proof into two cases.

\textbf{Case 1}: $P_n(M)=\{0\}$ for some $n\in \mathbb{N}$.
Choose the set
$$
I=\{k\in\mathbb{N}:P_{k}(M)\neq\{0\}\}.
$$

Since $P_n(M)=\{0\}$ we have $I\neq\mathbb{N}$. If $g\perp w^{k}$ for some $k\in I$, then by Corollary \ref{a1} we have
$$
\overline{P_{g}(M)}=\overline{P_{w^{k}}(M)}=\overline{P_{k}(M)}
$$
and thus the conclusion follows with $V=\overline{P_{k}(M)}$.

Suppose now that $\langle g,w^{k}\rangle\neq0$ for all $k\in I$. Choose $n\notin I$. Then $P_{n}(M)=\{0\}$ and therefore, for every $\lambda\in\mathbb{C}$, it follows that
$$
P_{g+\lambda w^{n}}(M)=P_{g}(M).
$$

For $k\in I$, since both
$$
\langle g,w^{k}\rangle\neq0 \ \text{and} \ \langle g,w^{n}\rangle\neq0
$$
there exists nonzero $\lambda_{1},\lambda_{2}\in\mathbb{C}$ such that $g\perp q$ where $q:=\lambda_{1}w^{k}+\lambda_{2}w^{n}$. Moreover $\overline{P_{q}(M)}=\overline{P_{k}(M)}$ because $P_{n}(M)=\{0\}$. Thus, since $P_{q}(M)$ and $P_{g}(M)$ are nonzero and $q\perp g$, Corollary \ref{a1} gives
$$
V:=\overline{P_{k}(M)}=\overline{P_{q}(M)}=\overline{P_{g}(M)}
$$
as desired.

\textbf{Case 2}: $P_n(M)\neq \lbrace 0\rbrace$ for all $n\in \mathbb{N}$.
Since $M\in\mathcal{M}$ and $P_g(M)\neq \lbrace 0\rbrace$, choose $0\neq h\in M$ such $M=V_{h,T^{*}_{z}}$. Write
$$
h(z,w)=\sum_{n=0}^{\infty}h_{n}w^{n}, \ \text{where} \ h_{n}=P_{n}(h).
$$

By the minimality of $M$, there exists $n_{1}, n_{2}\in\mathbb{N}$ such that $h_{n_{1}}\notin\operatorname{span}\{{h_{n_{2}}}\}$. Again, since
$$
\langle g,w^{n_{1}}\rangle\neq0 \ \text{and} \ \langle g,w^{n_{2}}\rangle\neq0
$$
we can choose nonzero $\lambda_{1}, \lambda_{2}\in\mathbb{C}$ such that $g\perp q$ with $q:=\lambda_{1}w^{n_{1}}+\lambda_{2}w^{n_{2}}$. It then follows from Corollary \ref{a1} that $\overline{P_{g}(M)}=\overline{P_{q}(M)}$ because $P_{q}(M)\neq\{0\}$.

Now if $n\neq n_{1},n_{2}$, then $q\perp w^{n}$ and therefore $\overline{P_{q}(M)}=\overline{P_{n}(M)}$. Consequently
$$
\overline{P_{g}(M)}=\overline{P_{n}(M)}
$$
for every $n\neq n_{1},n_{2}$. Thus taking $n_{0}\notin\{n_{1},n_{2}\}$, we obtain
$$
V:=\overline{P_{n_{0}}(M)}=\overline{P_{g}(M)}
$$
which is $t^{*}_{z}$-invariant.
\end{proof}

Theorem \ref{t3} to shows that although a minimal $T_z^*$-invariant subspace $M$ of $H^2(\mathbb D^2)$ may have nontrivial projections in several directions of the $w$-variable, all its nonzero projections onto $H^2(\mathbb D)$ are governed by a single $t_z^*$-invariant subspace $V$. Thus, the apparent dependence on the choice of the projection disappears at the level of minimal invariant subspaces.

The decomposition \eqref{eq2} to reduce information about invariant subspaces of $T_z^*$ to one-variable invariant subspaces of $t_z^*$, while Theorem \ref{t3} shows that these one-variable shadows are not independent: whenever they are nontrivial, they coincide with the same invariant subspace $V$. In this sense, the theorem establishes a rigid structure for the projections of minimal invariant subspaces. Moreover this structure is closely related to the mechanism in Theorem \ref{t2}. The operators $P_g$ and $L_g$ provide a link between $T_z^*$-invariant subspaces of $H^2(\mathbb D^2)$ and cyclic $t_z^*$-invariant subspaces of $H^2(\mathbb D)$. Theorem \ref{t3} shows that this correspondence is highly constrained for minimal invariant subspaces, and therefore provides a useful structural step toward the study of the ISP via universal operators.



\end{document}